\newtheorem{theorem}{Theorem}[section]
\newtheorem{lemma}[theorem]{Lemma}
\newtheorem{corollary}[theorem]{Corollary}
\newtheorem{definition}[theorem]{Definition}
\newtheorem{proposition}[theorem]{Proposition}
\newtheorem{remark}[theorem]{Remark}
\renewcommand{\dim}[0]{\operatorname{dim}}
\newcommand{\her}[0]{\operatorname{her}}
\newcommand{\image}[0]{\operatorname{im}}
\newcommand{\kernel}[0]{\operatorname{ker}}
\title{Subalgebras of finite codimension in semiprojective $C^*$-algebras}
\date{May 12, 2014}
\author{Dominic Enders}
\thanks{This work was supported by the SFB 878 {\itshape Groups, Geometry and Actions} and the Danish National Research Foundation through the {\itshape Centre for Symmetry and Deformation} (DNRF92).}
\subjclass[2010]{46L05}
\address{Dominic Enders
\newline Department of Mathematical Sciences, University of Copenhagen
\newline Universitetsparken 5, DK-2100 Copenhagen \O, Denmark}
\email{d.enders@math.ku.dk}
\begin{document}

\begin{abstract}
We show that semiprojectivity of a $C^*$-algebra is preserved when passing to $C^*$-subalgebras of finite codimension. 
In particular, any pullback of two semiprojective $C^*$-algebras over a finite-dimensional $C^*$-algebra is again semiprojective.
\end{abstract}

\maketitle

\section{Introduction}

Since its introduction in the 1980's, the concept of semiprojectivity has become one of the most frequently used technical tools in the theory of $C^*$-algebras.
Originally, Blackadar defined semiprojective $C^*$-algebras as generalizations of ANR-spaces (absolute neighborhood retracts) in order to extend classical shape theory to a non-commutative setting (\cite{Bla85}).
Since then shape theory for $C^*$-algebras has been well studied and its connection to other homology theories, especially to $E$-theory, has been worked out by Dadarlat (\cite{Dad94}).

Nowadays, semiprojectivity is most often used for technical purposes since it gives the right framework to formulate and study perturbation questions for $C^*$-algebras.
This concept has found applications in numerous branches of $C^*$-theory including various classification programs. 
It is, for instance, essential in the classification of fields of $C^*$-algebras (\cite{Dad09}) or the classification of $C^*$-algebras coming from graphs (\cite{ERR13}).
It is further an important tool in the Elliott classification program as illustrated by the use of semiprojective building blocks in the construction of models for classifiable $C^*$-algebras. 

It is therefore desirable to have a sufficient supply of semiprojective $C^*$-algebras.
However, finding concrete examples or verifying semiprojectivity for a given object turns out to be surprisingly difficult.
One reason for this is the lack of closure properties for the class of semiprojective $C^*$-algebras. 
In fact, semiprojectivity is in general not preserved under most standard $C^*$-algebraic constructions.

In this paper we provide a new permanence result for semiprojectivity. 
It is shown that semiprojectivity passes to subalgebras of finite codimension (Corollary \ref{subalgebra}).
This includes the typical situation of a pullback of two semiprojective $C^*$-algebras over a finite-dimensional $C^*$-algebra (Corollary \ref{pullback}).

The proof of our result is based on parts of the work of Eilers, Loring and Pedersen in \cite{ELP98}. 
We combine their semiprojectivity results for certain 1-NCCWs (one-dimensional non-commutative CW-complexes) with results on universal extensions of finite-dimensional $C^*$-algebras.
This combination allows us to extend lifting problems from finite codimension ideals while keeping track of an intermediate subalgebra.
This directly implies that semiprojectivity passes to ideals and, together with an existence result by T. Katsura, even to subalgebras of finite codimension.

The author would like thank T. Katsura for his proof of Lemma \ref{Katsura} which improved the main result of this paper.

\section{Ideals and subalgebras of finite codimension}\label{sec preliminaries}
In this section we study a particular $C^*$-algebra containing a subalgebra of finite codimension.
It is shown in Proposition \ref{generic} that this special case can be implemented into any other $C^*$-algebra which contains a finite codimension subalgebra.
One should therefore think of it as the universal such situation.\\

First we fix some notation, following the one used in \cite{ELP98}. Given a unital $C^*$-algebra $F$ we write
\[S_1F=\{f\in\mathcal{C}_0([0,1),F)|f(0)\in\mathbb{C}1\},\]
\[C_1F=\{f\in\mathcal{C}([0,1],F)|f(0)\in\mathbb{C}1\}\]
and given a $C^*$-subalgebra $G\subseteq F$ (with a fixed embedding) we further set
\[C_1(F|G)=\{f\in\mathcal{C}([0,1],F)|f(0)\in\mathbb{C}1,f(1)\in G\}.\]

Now assume that $F$ is finite-dimensional. In this case, as shown by Eilers, Loring and Pedersen in \cite[Section 2]{ELP98}, the extension
\[0\rightarrow S_1F \rightarrow C_1F \rightarrow F \rightarrow 0\]
is in a sense the universal unital extension of $F$. 
More precisely, they showed how to implement the above sequence into any given unital extension of $F$ by the following Urysohn type result.

Recall that a $^*$-homomorphism $\alpha\colon A\rightarrow B$ is called {\itshape proper} if the image of an approximate unit $(u_\lambda)$ for $A$ is an approximate unit for $B$. 
Most important here is the fact that such maps extend via $\overline{\alpha}(m)b=\lim_\Lambda\alpha(mu_\lambda)b$ to $^*$-homomorphisms $\mathcal{M}(A)\rightarrow\mathcal{M}(B)$ between the corresponding multiplier algebras.
As shown in \cite{ELP99}, one can obtain functoriality properties for Busby maps with respect to proper homomorphisms. 
This eventually leads to the existence of pushout diagrams, i.e. amalgamated free products, as in the following case.

\begin{lemma}[2.3.3 in \cite{ELP98}]\label{Urysohn}
For each extension $A$ of $F$, where $A$ is unital and separable and $\dim(F)<\infty$, there is commutative diagram of extensions
\[\begin{xy}\xymatrix{
0 \ar[r] & I \ar[r] & A \ar[r] & F \ar[r] & 0 \\
0 \ar[r] & S_1F \ar[r] \ar[u]_\alpha & C_1F \ar[r] \ar[u]_{\overline{\alpha}} & F \ar[r] \ar@{=}[u] & 0
}\end{xy}\]
such that $\overline{\alpha}$ is a unital $*$-homomorphism whose restriction $\alpha$ to $S_1F$ is a proper $^*$-homomorphism to $I$. In particular, the left square is a pushout diagram.
\end{lemma}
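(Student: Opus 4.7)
The plan is to split the extension using finite-dimensionality of $F$, pick a strictly positive element of $I$ centralized by the splitting, and realize $\overline{\alpha}$ as a tensor-product $*$-homomorphism into $A$.

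First, since $\pi\colon A\to F$ is a unital surjection onto a finite-dimensional $C^*$-algebra $F\cong\bigoplus_k M_{r_k}$, and finite-dimensional $C^*$-algebras are projective in the unital category, the identity on $F$ lifts through $\pi$ to a unital $*$-homomorphism $\phi\colon F\to A$ with $\pi\circ\phi=\mathrm{id}_F$. By separability of $A$, pick a strictly positive contraction $h\in I$, and replace it with the Haar average $h'=\int_{U(\phi(F))}uhu^*\,du$ over the compact unitary group of $\phi(F)$. The resulting $h'\in I$ is still a strictly positive contraction (averaging cannot annihilate a strictly positive element, by a continuity argument on positive functionals) and now commutes with all of $\phi(F)$.

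Next, define $\overline{\alpha}\colon C_1F\to A$ as the restriction to $C_1F\subseteq C([0,1],F)$ of the tensor-product $*$-homomorphism $C([0,1])\otimes F\to A$ induced by the two commuting maps $\phi\colon F\to A$ and $C([0,1])\to A,\ g\mapsto g(1-h')$. The verifications are routine: $\overline{\alpha}$ is unital; $\pi\circ\overline{\alpha}=\mathrm{ev}_1$, because $\pi(1-h')=1_F$ forces $\pi(g(1-h'))=g(1)$; and for $f\in S_1F$ we have $f(1)=0$, so $\overline{\alpha}(f)\in\ker\pi=I$. Properness of $\alpha=\overline{\alpha}|_{S_1F}$ follows from the strict positivity of $h'$: a standard approximate unit $v_n\cdot 1_F\in S_1F$, with $v_n$ equal to $1$ on $[0,1-1/n]$ and vanishing at $1$, is sent to $w_n(h')$ with $w_n(\tau)=v_n(1-\tau)$, which is an approximate unit of $I$.

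The central technical obstacle is producing exact commutation between elements of $I$ and $\phi(F)$; quasicentrality alone would only yield asymptotic commutation, so the averaging over $U(\phi(F))$ is essential and works precisely because $F$ is finite-dimensional and its unitary group is compact. The pushout property of the left square then reduces to the observation that $A=\phi(F)+I$ (a consequence of the splitting), so any pair of compatible maps $C_1F\to B$ and $I\to B$ agreeing on $S_1F$ specifies their values on the generating set $\overline{\alpha}(C_1F)\cup I\supseteq\phi(F)\cup I$ of $A$; the multiplier-algebra framework for proper $*$-homomorphisms from \cite{ELP99}, applied to the Busby relations $q(\phi(x))=\tau(x)$, yields the required unique extension to $A$.
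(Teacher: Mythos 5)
Your argument collapses at the very first step. You assert that, because $F$ is finite-dimensional, the identity on $F$ lifts through the unital surjection $\pi\colon A\to F$ to a unital $*$-homomorphism $\phi\colon F\to A$, i.e.\ that the extension splits. Finite-dimensional $C^*$-algebras are \emph{not} projective in the unital category (nor in the non-unital one): such a lift requires lifting the projections of $F$ to projections of $A$, which is impossible in general. Concretely, take $A=\mathcal{C}([0,1])$, $F=\mathbb{C}\oplus\mathbb{C}$ and $\pi$ evaluation at the two endpoints; a unital $*$-homomorphism $\phi\colon\mathbb{C}\oplus\mathbb{C}\to\mathcal{C}([0,1])$ splitting $\pi$ would produce a continuous projection equal to $1$ at $0$ and $0$ at $1$. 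Finite-dimensional $C^*$-algebras are only \emph{semi}projective, which lets one lift along an inductive system at some finite stage but not along a single arbitrary quotient. This is not a repairable technicality: the whole point of Lemma \ref{Urysohn} (and the reason $C_1F$ is called the universal \emph{unital extension} of $F$ in \cite{ELP98}) is that $\pi$ need not split, and $C_1F$ is the projective replacement for $F$ that always maps into $A$ compatibly. If your step 1 were valid, the lemma would be essentially content-free.

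The remainder of your construction (averaging a strictly positive $h\in I$ over the compact group $U(\phi(F))$ to get exact commutation, defining $\overline{\alpha}(g\otimes x)=g(1-h')\phi(x)$, and checking properness via $v_n\mapsto w_n(h')$) is internally consistent \emph{conditional} on the existence of $\phi$, and would indeed prove the lemma for split extensions. But the actual proof in \cite[Section 2]{ELP98} must work without a splitting: roughly, one lifts the matrix units of $F$ not to matrix units of $A$ but to a ``tapered'' system of contractions (positive lifts of the diagonal units, polar-decomposition-type lifts of the off-diagonal ones), and the cone coordinate in $C_1F$ --- the requirement $f(0)\in\mathbb{C}1$ together with the free value $f(1)$ --- is exactly what absorbs the failure of these lifts to satisfy the matrix-unit relations on the nose. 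Note also that the paper you are working from does not reprove this lemma; it imports it by citation, so any proof you supply must genuinely handle the non-split case.
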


We will need the following, slightly extended version of this result which in addition keeps track of a $C^*$-subalgebra of finite codimension.
It follows from this proposition together with Lemma \ref{Katsura} that $S_1F\subseteq C_1(F|G)\subseteq C_1F$, or more precisely
\[\begin{xy}\xymatrix{
0 \ar[r] & S_1F \ar[r] & C_1F \ar[r] & F \ar[r] & 0 \\
0 \ar[r] & S_1F \ar[r] \ar@{=}[u] & C_1(F|G) \ar[r] \ar[u]_\subseteq & G \ar[r] \ar[u]_\subseteq & 0
}\end{xy},\]
is the universal situation of a $C^*$-algebra containing a $C^*$-subalgebra of finite codimension.

\begin{proposition}\label{generic}
Suppose we are given a commutative diagram of extensions
\[\begin{xy}\xymatrix{
0 \ar[r] & I \ar[r] & A \ar[r] & F \ar[r] & 0 \\
0 \ar[r] & I \ar@{=}[u] \ar[r] & B \ar[u]_{\subseteq} \ar[r] & G \ar[u]_{\subseteq} \ar[r] & 0
}\end{xy}\]
with unital and separable $A$, a unital inclusion $B\subseteq A$ and $\dim(F)<\infty$. Then there exists a commutative diagram
\[\scalebox{0.9}{\begin{xy}\xymatrix{
& 0 \ar[rr] & & I \ar[rr] & & A \ar[rr] & & F \ar[rr] & & 0 \\
0 \ar[rr] & & S_1F \ar[rr] \ar[ur]^{\alpha} & & C_1F \ar[rr] \ar[ur]^{\overline{\alpha}} & & F \ar[rr] \ar@{=}[ur] & & 0 & \\
& 0 \ar[rr]|!{[ur];[dr]}\hole & & I \ar[rr]|!{[ur];[dr]}\hole \ar@{=}[uu]|!{[ul];[ur]}\hole & & B \ar[rr]|!{[ur];[dr]}\hole \ar[uu]|!{[ul];[ur]}\hole & & G \ar[rr] \ar[uu]|!{[ul];[ur]}\hole & & 0 \\
0 \ar[rr] & & S_1F \ar[rr] \ar[ur] \ar@{=}[uu] & & C_1(F|G) \ar[rr] \ar[ur] \ar[uu] & & G \ar[rr] \ar@{=}[ur] \ar[uu] & & 0 & 
}\end{xy}}\]
such that $\alpha$ is a proper $^*$-homomorphism. In particular, both the left square on the top and the left square on the bottom are pushout diagrams.
\end{proposition}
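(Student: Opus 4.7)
The plan is to apply Lemma \ref{Urysohn} to the top extension to obtain the upper half of the cube, then show that the data automatically restricts to yield the lower half, and finally verify the remaining pushout property by a Five Lemma argument.

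First, I apply Lemma \ref{Urysohn} to the extension $0 \to I \to A \to F \to 0$. This produces a unital $^*$-homomorphism $\overline{\alpha}\colon C_1F \to A$ whose restriction $\alpha = \overline{\alpha}|_{S_1F}\colon S_1F \to I$ is proper and which makes the top-left square a pushout. This immediately gives the entire top layer of the cube.

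Next, I observe that $\overline{\alpha}$ automatically carries $C_1(F|G)$ into $B$. Indeed, for $f \in C_1(F|G)$ the image $\overline{\alpha}(f)$ maps under the quotient $A \twoheadrightarrow F$ to $f(1) \in G$, and therefore lies in the preimage $B$. Writing $\beta = \overline{\alpha}|_{C_1(F|G)}\colon C_1(F|G) \to B$, one has $\beta|_{S_1F} = \alpha$, and all remaining commutativity relations in the cube (including the left vertical square with $\alpha$ on both edges) are then immediate.

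The last and main point is to check that the bottom-left square is again a pushout. To this end, let $B'$ denote the pushout of the diagram $C_1(F|G) \hookleftarrow S_1F \xrightarrow{\alpha} I$, whose existence is provided by the constructions of \cite{ELP99}. Because $\alpha$ is proper and $S_1F$ is the kernel of the evaluation-at-$1$ map $C_1(F|G) \twoheadrightarrow G$, the pushout $B'$ naturally sits in an extension $0 \to I \to B' \to G \to 0$. Applying its universal property to $\beta$ and the inclusion $I \hookrightarrow B$ produces a unital $^*$-homomorphism $\Phi\colon B' \to B$ which is the identity on $I$ and induces the identity on $G$. The Five Lemma then forces $\Phi$ to be an isomorphism, so that $B$ is indeed the desired pushout.

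The main obstacle is this last step, and more precisely the claim that the pushout $B'$ is itself an extension of $G$ by $I$. This uses the properness of $\alpha$ in an essential way and is a standard feature of pushouts that amalgamate along a closed ideal, as developed in the Busby-theoretic machinery of \cite{ELP98,ELP99}; once this is in hand, the rest of the verification reduces to a diagram chase and the Five Lemma.
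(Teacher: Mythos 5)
Your proposal is correct and follows essentially the same route as the paper: apply Lemma \ref{Urysohn} to the top extension, observe that $\overline{\alpha}$ carries $C_1(F|G)$ into $B=p^{-1}(G)$ by commutativity of the top face, and deduce the pushout statements from the Eilers--Loring--Pedersen machinery for proper maps (the paper simply cites \cite[Corollary 4.3]{ELP99} where you sketch the underlying Five Lemma argument).
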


\begin{proof}
Lemma \ref{Urysohn} provides us with the exact row of the upper front of the diagram and with the $^*$-homomorphism $\overline{\alpha}\colon C_1F\rightarrow A$ which restricts to a proper homomorphism $\alpha\colon S_1F\rightarrow I$ and makes the top face of the diagram commute. 
One now verifies that $\overline{\alpha}$ maps $C_1(F|G)$ to $B$.
The statement on the pushout diagrams then follows from \cite[Corollary 4.3]{ELP99}.
\end{proof}

Assume $A$ is a $C^*$-algebra which contains a subalgebra $B$ of finite codimension.
In order to apply the proposition above, we need to know that there is an ideal $I$ of $A$ which is contained in $B$ and still has finite codimension.
The existence of such an ideal is obvious in most situations, for example if $B=C\oplus_F D\subseteq C\oplus D=A$ where the pullback is taken over a finite-dimensional $C^*$-algebra.
However, as T. Katsura observed, ideals like this always exist and we are indebted to him for the proof of this fact.

\begin{lemma}[Katsura]\label{Katsura}
Let $A$ be a $C^*$-algebra and $B$ a $C^*$-subalgebra of finite codimension in $A$. Then there exists an ideal $I$ of finite codimension in $A$ which is contained in $B$.  
\end{lemma}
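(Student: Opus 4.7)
The plan is to construct $I$ as the annihilator in $A$ of a suitable finite-dimensional sub-bimodule of $A^*$. After replacing $A$ by its minimal unitization if necessary, one may assume $A$ is unital with $1\in B$. Write $n=\dim(A/B)$, choose a vector-space complement $V$ with $A=B\oplus V$ and $\dim V=n$, and consider the $n$-dimensional subspace $B^\perp=\{f\in A^*\colon f|_B=0\}$. Recall that $A^*$ carries a natural $A$-bimodule structure via $(x\cdot f\cdot y)(z)=f(yzx)$. The key observation is that the subalgebra condition forces the invariance
\[
B\cdot B^\perp\cdot B\subseteq B^\perp,
\]
since for $b,b'\in B$, $f\in B^\perp$, and $z\in B$, the element $b'zb$ again lies in $B$, hence $f(b'zb)=0$.

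Next, let $W\subseteq A^*$ be the $A$-sub-bimodule generated by $B^\perp$; using unitality, $W=A\cdot B^\perp\cdot A$. I claim $W$ is finite-dimensional. Decomposing both outer factors as $A=B+V$ yields
\[
W=B\cdot B^\perp\cdot B+B\cdot B^\perp\cdot V+V\cdot B^\perp\cdot B+V\cdot B^\perp\cdot V.
\]
The first summand sits inside $B^\perp$ by the invariance above, hence has dimension $\leq n$. For the second, $B\cdot B^\perp\subseteq B^\perp$ still has dimension $\leq n$, and multiplying on the right by the $n$-dimensional space $V$ produces a span of dimension $\leq n^2$; the third summand is bounded similarly. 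The fourth summand is spanned by the set $\{v\cdot f\cdot v':v,v'\in V,\ f\in B^\perp\}$, so has dimension $\leq n^3$. Altogether $\dim W\leq n+2n^2+n^3<\infty$.

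Finally, set $I:=W^\perp=\{a\in A\colon g(a)=0\text{ for all }g\in W\}$. As the annihilator of a finite-dimensional subspace of $A^*$, $I$ is norm-closed in $A$ with $\dim A/I=\dim W<\infty$. It is a two-sided ideal because $W$ is a sub-bimodule: for $a\in I$, $x,y\in A$, and $g\in W$, one has $g(xay)=(y\cdot g\cdot x)(a)=0$ since $y\cdot g\cdot x\in W$. And $B^\perp\subseteq W$ gives $I\subseteq(B^\perp)^\perp=B$, the last equality using that $B$ is norm-closed. The main technical step is the invariance $B\cdot B^\perp\cdot B\subseteq B^\perp$; without the subalgebra hypothesis the $A$-bimodule generated by an $n$-dimensional subspace of $A^*$ can easily be infinite-dimensional, and the whole strategy would break down.
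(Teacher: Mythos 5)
Your proof is correct, but it takes a genuinely different route from the paper's. The paper's argument (due to Katsura) sets $I:=\{x\in B: xA\subseteq B\}$, which is visibly a closed right ideal of $A$ contained in $B$; the two nontrivial steps are showing that $I$ is two-sided, which is done with specifically $C^*$-algebraic tools (one writes $x=y|x|^{\frac{1}{2}}$ via polar decomposition and uses that $B$ is selfadjoint), and bounding the codimension by representing $B$ on the finite-dimensional space $A/B$ and identifying $I$ with the kernel, giving $\dim(A/I)\leq n+n^2$. Your argument instead works entirely in the dual: the invariance $B\cdot B^\perp\cdot B\subseteq B^\perp$, the finite-dimensionality of the sub-bimodule $W=A\cdot B^\perp\cdot A$, the ideal property of $I=W^\perp$, and the inclusion $I\subseteq(B^\perp)^\perp=B$ all check out (the unitization reduction at the start is routine: pass to $A+\mathbb{C}1$ and $B+\mathbb{C}1$, produce the ideal there, and intersect with $A$). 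What your approach buys is generality: nowhere do you use the involution, positivity, or polar decomposition, only that $B$ is a norm-closed subalgebra, so your proof establishes the statement for closed subalgebras of finite codimension in arbitrary Banach algebras. The price is a slightly worse codimension estimate ($n+2n^2+n^3$ versus $n+n^2$), which is irrelevant for the application; the paper's construction also has the mild aesthetic advantage of exhibiting $I$ by an explicit intrinsic formula rather than as a pre-annihilator.
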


\begin{proof}
Consider $I:=\{x\in B:xA\subseteq B\}$, then $IA,BI\subseteq B$ and $I$ is a (closed) right ideal in $A$.
We claim that $I$ is in fact a two-sided ideal, i.e., we also have $AI\subseteq B$.
Given $x\in I$ we also have $x^*x\in I$ and therefore $|x|^{\frac{1}{2}}=(x^*x)^{\frac{1}{4}}\in I$.
Now write $x=y|x|^{\frac{1}{2}}$ with $y\in B$ using the polar decomposition of $x$.
Then for any $a\in A$ we find $(ax)^*=|x|^{\frac{1}{2}}y^*a^*\in B$ and since $B$ is selfadjoint also $ax\in B$.

Now let $A$ act by left multiplication on the finite dimensional quotient vector space $A\slash B$. 
More precisely, we consider the linear map $\pi\colon B\rightarrow\mathcal{L}\left(A\slash B\right)$ given by $\pi(x)(a+B)=xa+B$. 
Then $\kernel(\pi)=\{x\in B:xA\subseteq B\}=I$ and $\dim(B/I)=\dim(\image(\pi))\leq\dim\left(\mathcal{L}\left(A\slash B\right)\right)=n^2<\infty$.  
\end{proof}

We finish this section with the following easy lemma which allows us to restrict ourselves to the case of essential ideals.
Recall that an ideal $I$ in a $C^*$-algebra $A$ is said to be {\itshape essential} if its annihilator $I^\bot$ in $A$ is trivial, i.e. if the canonical map $A\rightarrow\mathcal{M}(I)$ is injective.

\begin{lemma}\label{essential}
Suppose $I$ is an ideal of finite codimension in a $C^*$-algebra $A$. Then there exists a decomposition $A=A'\oplus G$ such that $I$ is essential in $A'$ and $G$ is a finite-dimensional $C^*$-subalgebra of $A$ orthogonal to $I$.
\end{lemma}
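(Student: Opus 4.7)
The natural candidate for $G$ is the annihilator $I^\perp = \{a \in A : aI = Ia = 0\}$, and the strategy is to show this is a finite-dimensional ideal that splits off as a $C^*$-algebra direct summand.

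The plan is first to verify that $I^\perp$ is a closed two-sided $^*$-ideal of $A$ (standard) and that $I \cap I^\perp = 0$: any $x$ in the intersection satisfies $xx^* = 0$ (since $x^* \in I$ while $x \in I^\perp$), hence $x = 0$. Consequently the composition $I^\perp \hookrightarrow A \twoheadrightarrow A/I$ is injective, so $G := I^\perp$ is finite-dimensional. As a finite-dimensional $C^*$-algebra, $G$ carries a unit $e_G \in G \subseteq A$.

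Next I would show that $e_G$ is a central projection in $A$. Being the unit of the ideal $G$, it satisfies $e_G x = x = x e_G$ for all $x \in G$; for arbitrary $a \in A$ both $ae_G$ and $e_G a$ lie in $G$, so $ae_G = e_G(ae_G) = (e_G a)e_G = e_G a$ (using that $e_G$ acts as the identity on $G$ from both sides). With $e_G$ central, setting $A' := \{a \in A : e_G a = 0\}$ gives the desired $C^*$-algebra splitting $A = G \oplus A'$: every $a \in A$ decomposes as $a = e_G a + (a - e_G a)$ with $e_G a \in G$ and $a - e_G a \in A'$, and the intersection $G \cap A'$ is trivial since $e_G$ acts as the identity on $G$.

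It then remains to check $I \subseteq A'$ and that $I$ is essential in $A'$. The inclusion is immediate: for $x \in I$ one has $e_G x \in I \cdot I^\perp = 0$. For essentiality, the annihilator of $I$ inside $A'$ equals $A' \cap I^\perp = A' \cap G = 0$, so the canonical map $A' \to \mathcal{M}(I)$ is injective. I don't anticipate a real obstacle here; the only point requiring care is the centrality of $e_G$, from which the orthogonal $C^*$-algebraic splitting follows cleanly.
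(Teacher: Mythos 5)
Your proof is correct and follows essentially the same route as the paper: both take $G=I^\perp$, deduce finite-dimensionality of $G$ from the fact that it maps injectively into the finite-dimensional quotient $A/I$, and split $A$ using the central unit of the finite-dimensional ideal $G$. The only cosmetic difference is that the paper checks the quotient map is isometric on $G$ via an approximate unit for $I$, whereas you argue directly from $I\cap I^\perp=0$; both are fine.
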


\begin{proof}
By assumption we have a short exact sequence
\[\begin{xy}\xymatrix{0 \ar[r] & I \ar[r] & A \ar[r]^p & F \ar[r] & 0}\end{xy}\]
with $F$ finite-dimensional. 
Define $G$ to be the annihilator of $I$ in $A$, i.e.
\[G:=I^{\bot}=\{a\in A:aI=Ia=\{0\}\}.\]
The quotient map $p$ is isometric on $G$ since, using an approximate unit $u_{\lambda}$ for $I$, we have $\|p(a)\|=\inf_{\lambda}\|a(1-u_{\lambda})\|=\|a\|$ for every $a\in G$.
Hence $G$ is finite-dimensional and in particular unital. 
Denote the unit of $G$ by $e$, then $A$ decomposes as $(1-e)A(1-e)\oplus G$ since $G$ is also an ideal in $A$. 
It is clear that $I$ is essential in $A':=(1-e)A(1-e)$. 
If we denote $p(A')$ by $F'$, we further get a decomposition of the quotient map
\[\begin{xy}\xymatrix{
0 \ar[r] & I \ar[r] & A \ar[r]^p & F \ar[r] & 0 \\ 
0 \ar[r] & I \ar@{=}[u] \ar[r] & A'\oplus G \ar@{=}[u] \ar[r]^{p'\oplus id} & F'\oplus G \ar@{=}[u] \ar[r] & 0
}\end{xy}.\]
\end{proof}

\section{A new permanence result for semiprojectivity}\label{section main}
Here we extend the surprisingly short list of permanence properties for the class of semiprojective $C^*$-algebras.
Before outlining the strategy for proving the new closure result, we remind the reader of the necessary definitions.
More detailed information on lifting properties for $C^*$-algebras can be found in Loring's book \cite{Lor97}.

\begin{definition}[{\cite[Definition 2.10]{Bla85}}]\label{def sp}
A separable $C^*$-algebra is semiprojective if for every $C^*$-algebra $B$ and every increasing chain of ideals $J_n$ in $B$ with $J_\infty=\overline{\bigcup_n J_n}$, and for every $^*$-homomorphism $\varphi\colon A\rightarrow B/J_\infty$ there exists $n\in\mathbb{N}$ and a $^*$-homomorphism $\overline{\varphi}\colon A\rightarrow B/J_n$ making the following diagram commute: 
\[\begin{xy}\xymatrix{& B \ar@{->>}[d]^{\pi_0^n} \\ & B/J_n \ar@{->>}[d]^{\pi_n^\infty} \\ A \ar[r]^\varphi \ar@{-->}[ur]^{\overline{\varphi}} & B/J_\infty}\end{xy}\] 
\end{definition}

Equivalently, one may define semiprojectivity as a lifting property for maps to certain direct limits (cf. \cite[Chapter 14]{Lor97}): 
An increasing sequence of ideals $J_n$ in $B$ gives an inductive system $(B/J_n)_n$ with surjective connecting maps $\pi_n^{n+1}\colon B/J_n\rightarrow B/J_{n+1}$ and limit isomorphic to $B/J_\infty$. 
On the other hand, it is easily seen that every such system gives an increasing chain of ideals $(\kernel(\pi_1^n))_n$. 
Hence, semiprojectivity of a $C^*$-algebra $A$ is equivalent to being able to lift maps $\varphi$ as in
\[\begin{xy}\xymatrix{
& D_n \ar@{->>}[d]^{\pi_n^\infty} \\
A \ar[r]^(0.4)\varphi \ar@{-->}[ur]^{\overline{\varphi}} & \varinjlim D_n
}\end{xy}\]
to a finite stage, provided that all connecting maps $\pi_n^m\colon D_n\rightarrow D_m$ of the system are surjective. 
In this paper we will work in this picture.\\

The idea for Theorem \ref{main} can be roughly outlined as follows:
Given a lifting problem $\varphi\colon B\rightarrow\varinjlim D_n$ for a subalgebra $B$ of a $C^*$-algebra $A$, one tries to extend this to a lifting problem $\overline{\varphi}\colon A\rightarrow\varinjlim E_n$ for the larger algebra.
Now if $A$ is known to be semiprojective one can solve this new lifting problem, i.e. find a lift $\theta\colon A\rightarrow E_n$ as indicated below.
\[\begin{xy}\xymatrix{
A \ar@{-->}[r]^(0.4){\overline{\varphi}} \ar@{..>}@/^1.5pc/[rr]^\theta & \varinjlim E_n & E_n \ar@{->>}[l]\\
B \ar@{}[u]|{\rotatebox{90}{$\subseteq$}} \ar[r]^(0.4)\varphi & \varinjlim D_n \ar@{}[u]|{\rotatebox{90}{$\subseteq$}} & D_n \ar@{->>}[l] \ar@{}[u]|{\rotatebox{90}{$\subseteq$}}
}\end{xy}\]
The hope is that the restriction of $\theta$ to $B$ takes values in $D_n$ and therefore solves the original lifting problem for $B$.
This is of course not always the case but it can be arranged in the case where $B=I$ is an ideal and the quotient $A/I$ enjoys sufficient lifting properties as well.
In this case one can choose $E_n$ to be the multiplier algebra of $D_n$ but has be extra careful about non-compatibility of the limit and multiplier construction involved. 
Keeping track of a subalgebra sitting between $I$ and $A$ is the tricky part in proving our main result.

\begin{theorem}\label{main}
Let $I$ be an ideal of finite codimension in a semiprojective $C^*$-algebra $A$. Then any subalgebra $B$ of $A$ which contains $I$ is also semiprojective.
\end{theorem}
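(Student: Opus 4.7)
The plan is to follow the strategy outlined at the beginning of this section: from a lifting problem for $B$ we manufacture a lifting problem for $A$, solve the latter using the semiprojectivity of $A$, and then descend the resulting lift to one for $B$. Proposition~\ref{generic} is exactly what will allow the descent to be controlled on the subalgebra $B$.

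As a preliminary step I would use Lemma~\ref{essential} to reduce to the situation in which $I$ is essential in $B$: write $B=B'\oplus G_B$ with $I$ essential in $B'$ and $G_B$ finite-dimensional, and observe that since finite-dimensional $C^*$-algebras and finite direct sums of semiprojective $C^*$-algebras are semiprojective, it suffices to treat $B'$. This reduction supplies a canonical inclusion $B\hookrightarrow\mathcal{M}(I)$. Now consider a lifting problem $\varphi\colon B\to D_\infty:=\varinjlim D_n$ with surjective connecting maps. Let $C_\infty\subseteq D_\infty$ be a closed ideal generated by $\varphi(I)$ and chosen so that $\varphi(I)$ is essential in $C_\infty$, and set $C_n:=(\pi_n^\infty)^{-1}(C_\infty)\subseteq D_n$. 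Then $\varphi|_I\colon I\to C_\infty$ is proper, and its multiplier extension $\overline{\varphi|_I}\colon\mathcal{M}(I)\to\mathcal{M}(C_\infty)$, composed with the canonical map $A\to\mathcal{M}(I)$, produces a $*$-homomorphism $\bar\varphi\colon A\to\mathcal{M}(C_\infty)$ that restricts to $\varphi$ on $B$, the last point using essentiality of $\varphi(I)$ in $C_\infty$.

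To turn $\bar\varphi$ into an honest lifting problem at the direct-limit level, I would build an inductive system with $D_n\subseteq E_n\subseteq\mathcal{M}(C_n)$, surjective connecting maps, and $\bar\varphi(A)\subseteq\varinjlim E_n$; the technical care here concerns the well-known non-compatibility of the multiplier functor with direct limits. Applying semiprojectivity of $A$ to $\bar\varphi\colon A\to\varinjlim E_n$ then produces a lift $\theta\colon A\to E_n$ of $\bar\varphi$ at some finite stage $n$. The remaining and most delicate step is to verify $\theta(B)\subseteq D_n$, from which $\theta|_B\colon B\to D_n$ will be the desired lift of $\varphi$. Here Proposition~\ref{generic} is essential: it realizes the universal pair $S_1F\subseteq C_1(F|G)\subseteq C_1F$ inside $I\subseteq B\subseteq A$ via $\alpha$ and $\overline\alpha$, and identifies $B$ with the pushout $C_1(F|G)\amalg_{S_1F}I$. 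Hence $\theta|_B$ is determined by the compatible pair $\theta|_I$ and $\theta\circ\overline\alpha|_{C_1(F|G)}$; the first of these lands in $D_n$ by construction of $E_n$, and the second is controlled via the specific structure of $C_1(F|G)$ together with the fact that $\varphi\circ\overline\alpha|_{C_1(F|G)}$ already takes values in $D_\infty$, possibly at the cost of passing to a later stage of the inductive system.

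The main obstacle lies precisely in this joint design of the system $E_n$ and of the verification just sketched: one must make $E_n$ large enough for a lift of $\bar\varphi$ to exist via semiprojectivity of $A$, yet tight enough that $\theta$ is forced to carry all of $B$ into $D_n$ rather than into some strictly larger subalgebra of $E_n$. This is exactly the role of the subalgebra-tracking refinement in Proposition~\ref{generic}; without it, only the ideal case $B=I$ could be handled by this strategy.
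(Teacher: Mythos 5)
Your high-level plan coincides with the paper's (extend the lifting problem from $B$ to $A$ via the universal picture of Proposition~\ref{generic}, lift using semiprojectivity of $A$, then force the lift back into $D_n$ on $B$), but the proposal leaves the central step unresolved and sets it up in a way that cannot be completed as written. The difficulty is the passage from $\bar\varphi\colon A\to\mathcal{M}(C_\infty)$ to an honest lifting problem over an inductive system with surjective connecting maps. With your choice $C_n:=(\pi_n^\infty)^{-1}(C_\infty)$, properness of $\varphi|_I$ lives only at the limit stage, so there is no map from $A$ (or even from $I$) into anything at a finite stage, and $\mathcal{M}(\varinjlim C_n)$ is not $\varinjlim\mathcal{M}(C_n)$; the ``non-compatibility of the multiplier functor with direct limits'' you defer is precisely the heart of the proof, not a technicality. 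The paper resolves it by an ingredient absent from your sketch: semiprojectivity of the 1-NCCW $C_1(F|G)$ (\cite[Theorem 6.2.2]{ELP98}). One first lifts $\varphi\circ\overline{\alpha}|_{C_1(F|G)}$ to some finite stage $D_{n_0}$, then replaces $D_n$ by $\her(\psi_n(S_1F))+\psi_n(C_1(F|G))$ so that $D_n':=\her(\psi_n(S_1F))$ is an essential ideal of $D_n$ and $\psi_n\colon S_1F\to D_n'$ is proper \emph{at each finite stage}. Only then do the extensions $\overline{\psi}_n\colon C_1F\to\mathcal{M}(D_n')$ exist at finite stages, and the pushout $A=C_1F\amalg_{S_1F}I$ produces $\overline{\varphi}$ with values in $\varinjlim\mathcal{M}(D_n')$ rather than in $\mathcal{M}(\varinjlim D_n')$. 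Without some semiprojectivity input beyond that of $A$ itself (for the general subalgebra case, $C_1(F|G)$; for the ideal case, at least $S_1F$), your extension step cannot be carried out.

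Two further points. First, the descent $\theta(B)\subseteq D_n$ is not obtained from the pushout description of $B$ and the ``structure of $C_1(F|G)$'' as you suggest; in the paper it comes from a quotient-level argument: after discarding finitely many stages one arranges $\kernel(\overline{\overline{\varphi}})\subseteq\kernel(\overline{\overline{\psi}}_n)$ (possible since $F$ is finite-dimensional), so that $\overline{\overline{\pi}}_n^\infty$ is injective on $\overline{\overline{\psi}}_n(F)$; one then sets $E_n=\varrho_n^{-1}(\overline{\overline{\psi}}_n(F))$, observes $\iota_n(D_n)=\varrho_n^{-1}(\overline{\overline{\psi}}_n(G))$, and a short computation forces $\varrho_n'(\theta(B))\subseteq\overline{\overline{\psi}}_n(G)$, i.e.\ $\theta(B)\subseteq\iota_n(D_n)$. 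Second, the preliminary reduction should make $I$ essential in $A$ (Lemma~\ref{essential} applied compatibly to $A$ and $B$), not merely in $B$: essentiality in $A$ is what makes $A\to\mathcal{M}(I)$ injective and, downstream, what embeds $D_n$ into $\mathcal{M}(D_n')$ and makes $\overline{\overline{\psi}}_n$ injective on $G$, both of which are used in the final verification.
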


\begin{proof}
Let $C^*$-algebras $I\subseteq B\subseteq A$ as in the statement of the theorem be given. 
We may assume that both $A$ and $B$ are unital and share the same unit. 
Using Lemma \ref{essential} we find compatible decompositions $B=B'\oplus H_B\subseteq A'\oplus H_A=A$ such that $H_B$ and $H_A$ are finite-dimensional and $I$ is an essential ideal of $A'$ (and hence also of $B'$).
Since $A$ (resp. $B$) is semiprojective if and only if $A'$ (resp. $B'$) is semiprojective, we may assume that $I$ is an essential ideal of $A$ (and $B$).

First we apply Lemma \ref{generic} to implement the generic case $S_1F\subseteq C_1(F|G)\subseteq C_1F$ described in section \ref{sec preliminaries} into our situation $I\subseteq B\subseteq A$.
Here we denote by $G=B/I\subseteq A/I=F$ the finite-dimensional quotients.
We get a commutative diagram with exact rows
\begin{equation}\label{hugediagram1} \scalebox{0.78}{\xymatrix{
& 0 \ar[rr] & & I \ar[rr]^i & & A \ar[rr]^p & & F \ar[rr] & & 0  &\\
0 \ar[rr] & & S_1F \ar[rr]^{\iota} \ar[ur]^{\alpha} & & C_1F \ar[rr]^{\pi} \ar[ur]^{\overline{\alpha}} & & F \ar[rr] \ar@{=}[ur] & & 0 & \\
& 0 \ar[rr]|!{[ur];[dr]}\hole & & I \ar[rr]|!{[ur];[dr]}\hole \ar@{=}[uu]|!{[ul];[ur]}\hole & & B \ar[rr]|!{[ur];[dr]}\hole \ar[uu]|!{[ul];[ur]}\hole & & G \ar[rr] \ar[uu]|!{[ul];[ur]}\hole & & 0 \\
0 \ar[rr] & & S_1F \ar[rr] \ar[ur] \ar@{=}[uu] & & C_1(F|G) \ar[rr] \ar[ur] \ar[uu] & & G \ar[rr] \ar@{=}[ur] \ar[uu] & & 0 & }
}\tag*{($\ast$)}\end{equation}
where all upward arrows are inclusions, $\alpha$ is a proper $^*$-homomorphism and the upper and lower left squares are pushouts.
We denote the inclusion maps for the two sequences on the top by $i$, resp. $\iota$, and the quotient maps by $p$, resp. $\pi$.

Now let an isomorphism $\varphi\colon B\rightarrow \varinjlim D_n$ be given, where the direct limit is taken over an inductive system of separable $C^*$-algebras $D_n$ with surjective connecting homomorphisms $\pi_n^m\colon D_n\rightarrow D_m$.
The induced (surjective) homomorphisms $D_n\rightarrow\varinjlim D_n$ will be denoted by $\pi_n^\infty$. 
We will construct a partial lift for $\varphi$ in order to prove semiprojectivity of $B$. 

The $C^*$-algebra $C_1(F|G)$ is known to be semiprojective by \cite[Theorem 6.2.2]{ELP98} (but see also Remark \ref{1-NCCW}), hence we can find a $^*$-homomorphism $\psi\colon C_1(F|G)\rightarrow D_{n_0}$ for some integer $n_0$ which makes the diagram
\[\begin{xy}\xymatrix{
& & D_{n_0} \ar@{->>}[d]^{\pi_{n_0}^\infty} \\
C_1(F|G) \ar@/^1pc/@{-->}[urr]^{\psi} \ar[r]^(0.6){\overline{\alpha}} & B \ar[r]^(0.4){\varphi} & \varinjlim D_n
}\end{xy}\]
commute.
Writing $\psi_n=\pi_{n_0}^n\circ\psi_{n_0}$ for $n\geq n_0$ we may now assume that
\[D_n=\her\left(\psi_n(S_1F)\right)+\psi_n(C_1(F|G))\]
since otherwise we just replace $D_n$ by the $C^*$-algebra on the right hand side of the equation.
In order to do so, note that the restriction of $\pi_n^m$ to these new algebras is still surjective and that we do not change the limit $\varinjlim D_n$ since properness of $\alpha$ implies that 
\[\begin{array}{rl}
&\varinjlim\left(\her\left(\psi_n(S_1F)\right)+\psi_n(C_1(F|G))\right)\\
=&\her\left((\pi_n^\infty\circ\psi_n)(S_1F)\right)+(\pi_n^\infty\circ\psi_n)(C_1(F|G)) \\ 
=&\her\left((\varphi\circ\alpha)(S_1F)\right)+(\varphi\circ\beta)(C_1(F|G))\\ 
=&\varphi(\her(\alpha(S_1F))+\beta(C_1(F|G)))\\
=&\varphi(I+\beta(C_1(F|G)))\\
=&\varphi(B)=\varinjlim D_n
\end{array}\]
where $\her(E)$ denotes the hereditary subalgebra $\overline{EDE}$ generated by a $C^*$-subalgebra $E$ of $D$.
This way we find ideals
\[D_n':=\her\left(\psi_n(S_1F)\right)\triangleleft D_n\]
which are easily seen to be essential ones. 
The restrictions of $\pi_n^m$ to $D_n'$ are still surjective with $\varinjlim D_n'=\varphi(\her(\alpha(S_1F))=\varphi(I)$.

Important for us is that the restriction of $\psi_n$ as a homomorphism from $S_1F$ to $D_n'$ is now proper and hence extends to a homomorphism $\overline{\psi}_n$ making left square on the top of the diagram
\begin{equation}\label{hugediagram2} \scalebox{0.67}{\xymatrix{
& 0 \ar[rr]& & D_n' \ar[rr]^{\iota_n'} & & \mathcal{M}(D_n') \ar[rr]^{\varrho_n'} & & \mathcal{Q}(D_n') \ar[rr] & & 0&\\
0 \ar[rr] & & S_1F \ar[rr] \ar[ur]^{\psi_n} & & C_1F \ar@{-->}[ur]^{\overline{\psi_n}} \ar[rr] & & F \ar[rr] \ar@{-->}[ur]^{\overline{\overline{\psi_n}}} & & 0\\
& 0 \ar[rr]|!{[ur];[dr]}\hole & & D_n' \ar[rr]|!{[ur];[dr]}\hole \ar@{=}[uu]|!{[ul];[ur]}\hole & & D_n \ar[uu]^(0.4){\iota_n}|!{[ul];[ur]}\hole \ar[rr]|!{[ur];[dr]}\hole & & D_n/D_n' \ar[rr] \ar@{..>}[uu]|!{[ul];[ur]}\hole & & 0\\
0 \ar[rr] & & S_1F \ar[rr] \ar[ur] \ar@{=}[uu] & & C_1(F|G) \ar[ur]^{\psi_n} \ar[uu] \ar[rr] & & G \ar[rr] \ar[uu] \ar@{..>}[ur] & & 0
}}\tag*{($\ast\ast$)}\end{equation}
commute.
Here $\iota'_n$ is the canonical inclusion of $D_n'$ into its multiplier algebra, $\varrho_n'$ the correponding quotient map and $\iota_n$ the canonical inclusion coming from the fact that $D_n'$ is an essential ideal in $D_n$.
One checks that for all $f\in C_1(F|G)$ and $g\in S_1F$ 
\[\overline{\psi}_n(f)\psi_n(g)=\psi_n(fg)=\psi_n(f)\psi_n(g)=(\iota_n\circ\psi_n)(f)\psi_n(g)\] 
holds, so that by properness of $\psi_n\colon S_1F\rightarrow D_n'$ the maps $\overline{\psi_n}$ and $\iota_n\circ\psi_n$ agree on the subalgebra $C_1(F|G)$, i.e. the middle square in the diagram above commutes as well.
Hence also the square of induced maps between the quotients, indicated by the dotted arrows on the right side of the diagram, commutes. 
The induced map on $F$ will be denoted by $\overline{\overline{\psi_n}}$. Note that $\overline{\overline{\psi_n}}$ is injective when restricted to $G$ because $D_n'$ is essential in $D_n$.

Let us now focus on the multiplier algebras $\mathcal{M}(D_n')$.
By \cite[Theorem 2.3.9]{WO93}, each $\pi_n^m\colon D_n'\rightarrow D_m'$ extends naturally to a surjective homomorphism $\overline{\pi}_n^m\colon\mathcal{M}(D_n')\rightarrow\mathcal{M}(D_m')$.
Hence $(\mathcal{M}(D_n'),\overline{\pi}_n^m)$ forms a new inductive system with surjective connecting maps.
Of course, this also gives an inductive structure $(\mathcal{Q}(D_n'),\overline{\overline{\pi}}_n^m)$ for the corresponding quotients, the corona algebras $\mathcal{Q}(D_n')$.
We further have an embedding $\iota_\infty'$ of $\varinjlim D_n'$ as an ideal in $\varinjlim\mathcal{M}(D_n')$ which is induced by the maps $\iota_n'$. 
Similarly, the maps $\iota_n$ induce an inclusion $\iota_\infty\colon\varinjlim D_n\rightarrow \varinjlim\mathcal{M}(D_n')$ since they are compatible with both limit structures. 
Next, we will show that the new inductive system of multiplier algebras provides a lifting problem for $A$.
Using the pushout-situation in the upper left square of \ref{hugediagram1}, the pair $(\iota_{\infty}'\circ\varphi,\overline{\pi}_
n^\infty\circ\overline{\psi}_n)$ defines a homomorphism $\overline{\varphi}$ as indicated below
\[\begin{xy}\xymatrix{
& & & D_n' \ar[dl]^{\pi_n^\infty} \ar[d]^{\iota_n'} \\
S_1F \ar[r]^>>>>>{\alpha} \ar[d]_{\iota} \ar@/^/[urrr]^(0.4){\psi_n} & I \ar[r]^<<<<<{\varphi} \ar[d]|(0.65)\hole_i & \varinjlim D_n' 	\ar[d]^{\iota_{\infty}'}|(0.3)\hole & \mathcal{M} (D_n') \ar[dl]^{\overline{\pi}_n^\infty} \\
C_1F \ar[r]^>>>>>{\overline{\alpha}} \ar@/^/[urrr]^(0.4){\overline{\psi_n}} & A \ar@{-->}[r]^<<<<<{\overline{\varphi}} & \varinjlim\mathcal{M}(D_m')& 
}\end{xy}\]
since both maps are compatible on $S_1F$, meaning $\iota_\infty'\circ\varphi\circ\alpha=\overline{\pi}_n^\infty\circ\overline{\psi}_n\circ\iota$, 
and by that give rise to $\overline{\varphi}$ satisfying $\overline{\varphi}\circ i=\iota_\infty'\circ\varphi$. 
Because of the pushout situation in the lower left square of \ref{hugediagram1} and
\[\begin{array}{rl}
\overline{\varphi}\circ\overline{\alpha}_{|C_1(F|G)}&=\overline{\pi}^\infty_n\circ(\overline{\psi}_n)_{|C_1(F|G)}\\
&=\overline{\pi}^\infty_n\circ\iota_n\circ\psi_n \\
&=\iota_\infty\circ\pi^\infty_n\circ\psi_n
\\&=\iota_\infty\circ\varphi\circ\overline{\alpha}_{|C_1(F|G)},
\end{array}\]
the restriction of $\overline{\varphi}$ to $B$ factors as $\iota_{\infty}\circ\varphi$.
We end up in the following situation
\[\scalebox{0.75}{\xymatrix{
& 0 \ar[rr] & & \varinjlim D_n' \ar[rr]^{\iota_{\infty}'} & & \varinjlim\mathcal{M}(D_n') \ar[rr]^{\varrho_{\infty}'} & & \varinjlim\mathcal{Q}(D_n') \ar[rr] & & 0 \\
0 \ar[rr] & & I \ar[rr] \ar[ur]^{\varphi} & & A \ar[rr] \ar[ur]^{\overline{\varphi}} & & F \ar[rr] \ar@{..>}[ur]^{\overline{\overline{\varphi}}} & & 0 & \\
& 0 \ar[rr]|!{[ur];[dr]}\hole & & \varinjlim D_n' \ar[rr]|!{[ur];[dr]}\hole \ar@{=}[uu]|!{[ul];[ur]}\hole & & \varinjlim D_n \ar[rr]|!{[ur];[dr]}\hole \ar[uu]^(0.4){\iota_{\infty}}|!{[ul];[ur]}\hole & & \varinjlim (D_n/D_n') \ar[rr] \ar[uu]|!{[ul];[ur]}\hole & & 0 \\
0 \ar[rr] & & I \ar[rr] \ar[ur] \ar@{=}[uu] & & B \ar[rr] \ar[ur]^{\varphi} \ar[uu] & & G \ar[rr] \ar@{..>}[ur] \ar[uu] & & 0 & 
}}\]
where all rows are exact, every square commutes and in each inductive system all connecting maps are surjective.
The dotted arrows indicate the maps induced by $\varphi$ and $\overline{\varphi}$. 
Let $\overline{\overline{\varphi}}$ denote the map coming from $\overline{\varphi}$, then by finite-dimensionality of $F$ we find $\kernel\left(\overline{\overline{\varphi}}\right)\subseteq\kernel\left(\overline{\overline{\psi}}_n\right)$ for large enough $n$. 
Hence we may assume that $\overline{\overline{\pi}}_n^\infty$ is injective on the image of $\overline{\overline{\psi}}_n$.

We now pass to a suitable subsystem of $(\mathcal{M}(D_n'))_n$. Consider the subalgebras
\[E_n:=\varrho_n^{-1}\left(\overline{\overline{\psi}}_n(F)\right)=\iota_n'(D_n')+\overline{\psi}_n(C_1F)\subseteq\mathcal{M}(D_n').\]
One easily checks that the restrictions of $\overline{\pi}_n^m$ to this subsystem are again surjective and that the limit $\varinjlim E_n=\overline{\pi}^\infty_n(E_n)$ contains $\overline{\varphi}(A)$. 
Using diagram \ref{hugediagram2} we further find
\[\iota_n(D_n)=\varrho_n^{-1}\left(\overline{\overline{\psi}}_n(G)\right)\subseteq E_n.\]

Finally, we are able to use our main assumption, semiprojectivity of $A$, to find a homomorphism $\theta$ that lifts $\overline{\varphi}$ to some $E_n$:
\[\begin{xy}\xymatrix{
& 0 \ar[r] & D_n \ar@{->>}[d]^{\pi_n^\infty} \ar[r]^{\iota_n'} & E_n \ar@{->>}[d]^{\overline{\pi}_n^\infty} \ar[r]^(0.4){\varrho_n'} & \overline{\overline{\psi}}_n(F) \ar@{->>}[d]^{\overline{\overline{\pi}}_n^\infty}_\sim \ar[r] & 0 \\
& 0 \ar[r] & \varinjlim D_n \ar[r]^{\iota'_\infty} & \varinjlim E_n \ar[r]^(0.4){\varrho_{\infty}'} & \overline{\overline{\varphi}}(F) \ar[r] & 0 \\
0 \ar[r] & I \ar[r]^i \ar[ur]^(0.4){\varphi} & A \ar[r]^p \ar[ur]^(0.4){\overline{\varphi}} \ar@{..>}@/^1pc/[uur]^(0.7)\theta|(0.37)\hole & F \ar[r] \ar[ur]^(0.4){\overline{\overline{\varphi}}} & 0 \\
}\end{xy}\]
The crucial point is that this lift will automatically map the subalgebra $B$ to $\iota_n(D_n)\subseteq E_n$. 
This follows from
\[(\overline{\overline{\pi}}_n^\infty\circ\varrho_n'\circ\theta)(B)=(\varrho_\infty'\circ\overline{\pi}_n^\infty\circ\theta)(B)=(\varrho_\infty'\circ\overline{\varphi})(B)=\overline{\overline{\varphi}}(G)\]
and the fact that $\overline{\overline{\pi}}_n^\infty$ is injective on $\overline{\overline{\psi}}_n(F)$. 
Therefore one finds $(\varrho_n'\circ\theta)(B)\subseteq\overline{\overline{\psi}}_n(G)$, in other words $\theta(B)\subseteq \iota_n(D_n)$.
By injectivity of $\iota_n$, we may now regard $\theta_{|B}$ as a map to $D_n$. 
One then immediately verifies 
\[\iota_{\infty}\circ\pi_n^\infty\circ\theta_{|B}=\overline{\pi}_n^\infty\circ\iota_n\circ\theta_{|B}=\overline{\varphi}=\iota_{\infty}\circ\varphi.\] 
By injectivity of $\iota_{\infty}$ this means $\pi_n^\infty\circ\theta=\varphi$, i.e. we have found a solution $\theta_{|B}$ to our original lifting problem $\varphi\colon B\rightarrow\varinjlim D_n$ and by that shown that $B$ is semiprojective.
\end{proof}

Combining Lemma \ref{Katsura} with Theorem \ref{main} we now obtain

\begin{corollary}\label{subalgebra}
A $C^*$-subalgebra of finite codimension in a semiprojective $C^*$-algebra is semiprojective.
\end{corollary}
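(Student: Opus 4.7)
The plan is to obtain the corollary as an immediate consequence of the two main ingredients already established in the paper: Lemma \ref{Katsura} on the existence of finite-codimensional ideals inside finite-codimensional subalgebras, and Theorem \ref{main} on the passage of semiprojectivity to intermediate subalgebras.

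More precisely, given a semiprojective $C^*$-algebra $A$ and a $C^*$-subalgebra $B \subseteq A$ of finite codimension, I would first invoke Lemma \ref{Katsura} to produce a two-sided ideal $I \triangleleft A$ which is contained in $B$ and still has finite codimension in $A$. This places us in exactly the configuration $I \subseteq B \subseteq A$ required by Theorem \ref{main}: $I$ is an ideal of finite codimension in the semiprojective algebra $A$, and $B$ is an intermediate $C^*$-subalgebra. Applying Theorem \ref{main} then yields semiprojectivity of $B$.

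There is essentially no obstacle remaining at this stage, since the difficult analytic content (extending the lifting problem from $B$ to $A$ via multiplier algebras of the hereditary ideals $D_n'$, controlling the intermediate subalgebra through the generic diagram of Proposition \ref{generic}, and pulling the lift back down to $B$) has been fully absorbed into Theorem \ref{main}. The only slightly subtle point one should mention is the initial reduction: one may assume $B$ and $A$ are both unital with a common unit, but this causes no loss of generality since codimension is preserved under unitization and semiprojectivity is stable under unitization for separable algebras. Hence the proof reduces to a one-line invocation of the two results.
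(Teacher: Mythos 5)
Your proof is correct and is exactly the paper's argument: the corollary is obtained by combining Lemma \ref{Katsura} (producing a finite-codimension ideal $I\triangleleft A$ with $I\subseteq B$) with Theorem \ref{main}. The remarks about unitization are unnecessary here, since any such reduction is already handled inside the proof of Theorem \ref{main}.
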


The following is the most typical situation in which Corollary \ref{subalgebra} applies, we therefore state it explicitly: 
Assume we are given two semiprojective $C^*$-algebras $A$ and $B$ together with $*$-homomorphisms $\varphi\colon A\rightarrow F$ and $\psi\colon B\rightarrow F$ to a finite-dimensional $C^*$-algebra $F$.
Then the pullback $A\oplus_F B$ along $\varphi$ and $\psi$ is a subalgebra of finite codimension in the semiprojective $C^*$-algebra $A\oplus B$. Hence we have 

\begin{corollary}\label{pullback}
If $A$ and $B$ are semiprojective $C^*$-algebras, any pullback of $A$ and $B$ over any finite-dimensional $C^*$-algebra is also semiprojective. 
\end{corollary}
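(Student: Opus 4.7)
The plan is to realize the pullback $A\oplus_F B$ as a $C^*$-subalgebra of finite codimension inside the semiprojective $C^*$-algebra $A\oplus B$, and then invoke Corollary \ref{subalgebra}.

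First I would observe that $A\oplus B$ is semiprojective whenever $A$ and $B$ are. Given any lifting problem $\chi\colon A\oplus B\to\varinjlim D_n$, one restricts $\chi$ to the two orthogonal summands to obtain independent lifting problems for $A$ and $B$, each of which admits a finite-stage lift by semiprojectivity; passing to a common sufficiently large index and recombining the two lifts (using that their images sit inside the orthogonal hereditary subalgebras cut out by the central projections $\chi(1_A)$ and $\chi(1_B)$) yields a lift of $\chi$. This is the standard, well-known closure of the semiprojective class under finite direct sums.

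Next, I would identify
\[A\oplus_F B=\{(a,b)\in A\oplus B:\varphi(a)=\psi(b)\}\]
as the kernel of the bounded linear map
\[\delta\colon A\oplus B\to F,\qquad (a,b)\mapsto\varphi(a)-\psi(b).\]
This $\delta$ is not multiplicative, but it is a continuous linear surjection onto the finite-dimensional subspace $\varphi(A)+\psi(B)\subseteq F$, which is all that is needed to control codimension. The induced linear injection $(A\oplus B)/(A\oplus_F B)\hookrightarrow F$ shows that $A\oplus_F B$ is a $C^*$-subalgebra of $A\oplus B$ of codimension at most $\dim(F)<\infty$.

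The argument concludes by applying Corollary \ref{subalgebra} to the inclusion $A\oplus_F B\subseteq A\oplus B$. At this stage there is effectively no obstacle remaining: the substantial content has already been done in Theorem \ref{main} and Corollary \ref{subalgebra}, and the present corollary only requires the elementary codimension bound above together with the permanence of semiprojectivity under finite direct sums.
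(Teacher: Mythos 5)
Your argument is exactly the one the paper gives: realize $A\oplus_F B$ as a finite-codimension $C^*$-subalgebra of the semiprojective algebra $A\oplus B$ (the paper states this identification in the sentence preceding the corollary) and apply Corollary \ref{subalgebra}. Your added details -- the standard closure of semiprojectivity under finite direct sums and the codimension bound via the linear map $\delta(a,b)=\varphi(a)-\psi(b)$ -- are correct, so the proposal matches the paper's proof.
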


\begin{remark}\label{1-NCCW}
One of the most important examples of semiprojective $C^*$-algebras is the class of 1-NCCWs (one-dimensional non-commutative CW-complexes), defined in \cite{ELP98} as pullbacks of the form 
\[\begin{xy}\xymatrix{\text{1-NCCW} \ar@{-->}[r] \ar@{-->}[d] & G \ar[d] \\ \mathcal{C}([0,1],F) \ar[r]^\partial & F\oplus F}\end{xy}\]
with $F$ and $G$ finite-dimensional $C^*$-algebras and $\partial$ evaluation at the endpoints of the interval $[0,1]$.

Their original proof of semiprojectivity for 1-NCCW's (see \cite[sections $5-6$]{ELP98}) is rather instransparent while Corollary \ref{pullback} gives a very natural explanation for this fact.
Although we used their result in the proof Theorem \ref{main}, we actually only need to know semiprojectivity for 1-NCCWs of the special form $C_1(F|G)$.
This again can be deduced from a version of \ref{main} for finite codimension ideals together with rather elementary methods from \cite{Bla85} and \cite{LP98}.
In fact, the proof of \ref{main} simplifies a lot if one restricts to the case $I=B$ and it only requires semiprojectivity of the dimension-drop algebras $S_1F$ (which was already shown in \cite{Lor96}).
Therefore, Theorem \ref{main} can be used to give a simplified proof for semiprojectivity of 1-NCCWs.    
\end{remark}

\begin{remark}
The result of Theorem \ref{main} in the case of an ideal of codimension 1 is closely related to a conjecture by Blackadar. 
In \cite{Bla04} he conjectured that for an extension
\[\begin{xy}\xymatrix{
0 \ar[r] & I \ar[r] & A \ar[r] & \mathbb{C} \ar[r] & 0
}\end{xy}\]
semiprojectivity of $I$ implies semiprojectivity of $A$.
While Eilers and Katsura (\cite{EK}, see also \cite{Sor12}) were able to construct a counterexample to this conjecture, Theorem \ref{main} shows that the converse implication holds in general.
\end{remark}

\begin{remark}\label{Busby}
The strategy of extending lifting problems as in \ref{main} can be used to obtain more general permanence results for semiprojectivity.
In fact, given an extension $0\rightarrow I\rightarrow A\rightarrow A/I\rightarrow 0$ one can show that semiprojectivity of $A$ and $A/I$ implies semiprojectivity for $I$ provided that in addition the Busby map $\tau\colon A/I\rightarrow\mathcal{Q}(I)$ associated to the extension has good lifting properties.
It is implicitly used in \ref{main} that this is the case whenever $\tau$ has finite-dimensional image.
A general study of lifting properties for Busby maps will be discussed elsewhere.
\end{remark}

\end{document}